\title{How strong can the Parrondo effect be?  II} 
\author{S. N. Ethier\thanks{Department of Mathematics, University of Utah, 155 S. 1400 E., Salt Lake City, UT 84112.}\ \ and Jiyeon Lee\thanks{Department of Statistics, Yeungnam University, 280 Daehak-Ro, Gyeongsan, Gyeongbuk 38541, South Korea.}} 
\date{}
\newtheorem{theorem}{Theorem}
\newtheorem{lemma}[theorem]{Lemma}
\newtheorem{corollary}[theorem]{Corollary}
\theoremstyle{definition}
\newtheorem{example}{Example}
\DeclareMathOperator*{\argmax}{{arg\,max}} % * makes it behave like lim with subscripts under operator
\newenvironment{newreferences}
               {\section*{References}
                \raggedright
                \begin{list}{}{\setlength{\itemsep}{0pt}
                               \setlength{\parsep}{0pt}
                               \setlength{\labelwidth}{0pt}
                               \setlength{\leftmargin}{12pt}
                               \setlength{\labelsep}{0pt}}
                \setlength{\itemindent}{-12pt}
               }{\end{list}}
\begin{document}
\maketitle

\begin{abstract}
Parrondo's coin-tossing games comprise two games, $A$ and $B$.  The result of game $A$ is determined by the toss of a fair coin.  The result of game $B$ is determined by the toss of a $p_0$-coin if capital is a multiple of $r$, and by the toss of a $p_1$-coin otherwise.  In either game, the player wins one unit with heads and loses one unit with tails.  Game $B$ is fair if $(1-p_0)(1-p_1)^{r-1}=p_0\,p_1^{r-1}$.  In a previous paper we showed that, if the parameters of game $B$, namely $r$, $p_0$, and $p_1$, are allowed to be arbitrary, subject to the fairness constraint, and if the two (fair) games $A$ and $B$ are played in an arbitrary periodic sequence, then the rate of profit can not only be positive (the so-called Parrondo effect), but also be arbitrarily close to 1 (i.e., 100\%).  Here we prove the same conclusion for a random sequence of the two games instead of a periodic one, that is, at each turn game $A$ is played with probability $\gamma$ and game $B$ is played otherwise, where $\gamma\in(0,1)$ is arbitrary.\medskip

\noindent\textit{Keywords:} Parrondo games; rate of profit; strong law of large numbers; stationary distribution; random walk on the $n$-cycle\medskip

\noindent 2010 MSC: Primary 60J10; secondary 60F15
\end{abstract}

\section{Introduction}\label{sec-intro}

The flashing Brownian ratchet of Ajdari and Prost (1992) is a stochastic model in statistical physics that is also of interest to biologists in connection with so-called molecular motors.  In 1996 J. M. R. Parrondo proposed a toy model of the flashing Brownian ratchet involving two coin-tossing games.  Both of the games, $A$ and $B$, are individually fair or losing, whereas the random mixture (toss a fair coin to determine whether game $A$ or game $B$ is played) is winning, as are periodic sequences of the games, such as $AABB\,AABB\,AABB\,\cdots$.  
 
Harmer and Abbott (1999) described the games explicitly.  For simplicity, we omit the bias parameter, so that both games are fair.  Let us define a $p$-coin to be a coin with probability $p$ of heads.  In Parrondo's original games, game $A$ uses a fair coin, while game $B$ uses two biased coins, a $p_0$-coin if capital is a multiple of 3 and a $p_1$-coin otherwise, where
\begin{equation}\label{p0,p1}
p_0=\frac{1}{10}\quad\text{and}\quad p_1=\frac34.
\end{equation}
The player wins one unit with heads and loses one unit with tails.  Both games are fair, but the random mixture, denoted by $\frac12 A+\frac12 B$, has long-term cumulative profit per game played (hereafter, rate of profit) 
\begin{equation}\label{(A+B)/2}
\mu\big({\textstyle\frac12}A+{\textstyle\frac12}B\big)=\frac{18}{709}\approx0.0253879,
\end{equation}
and the pattern $AABB$, repeated ad infinitum, has rate of profit
\begin{equation}\label{AABB}
\mu(AABB)=\frac{4}{163}\approx0.0245399.
\end{equation}
Dinis (2008) found that the pattern $ABABB$ (or any cyclic permutation of it) has the highest rate of profit, namely
\begin{equation}\label{ABABB}
\mu(ABABB)=\frac{3613392}{47747645}\approx0.0756769.
\end{equation}

How large can these rates of profit be if we vary the parameters of the games, subject to a fairness constraint?  

Game $A$ is always the same fair-coin-tossing game.  With $r\ge3$ an integer, game $B$ is a mod~$r$ capital-dependent game that uses two biased coins, a $p_0$-coin ($p_0<\frac12$) if capital is a multiple of $r$, and a $p_1$-coin ($p_1>\frac12$) otherwise.  The probabilities $p_0$ and $p_1$ must be such that game $B$ is fair,  requiring the constraint
$$
(1-p_0)(1-p_1)^{r-1}=p_0\, p_1^{r-1},
$$
or equivalently,
\begin{equation}\label{rho-param}
p_0=\frac{\rho^{r-1}}{1+\rho^{r-1}}\quad\text{and}\quad p_1=\frac{1}{1+\rho}
\end{equation}
for some $\rho\in(0,1)$.  The special case of $r=3$ and $\rho=\frac13$ gives \eqref{p0,p1}.  

The games are played randomly or periodically.  Specifically, we consider the random mixture $\gamma A+(1-\gamma)B$ (game $A$ is played with probability $\gamma$ and game $B$ is played otherwise) as well as the pattern $\Gamma(A,B)$, repeated ad infinitum.  We denote the rate of profit by 
$$
\mu(r,\rho,\gamma A+(1-\gamma)B)\quad\text{or}\quad\mu(r,\rho,\Gamma(A,B)), 
$$
so that the rates of profit in \eqref{(A+B)/2}--\eqref{ABABB} in this notation become $\mu(3,\frac13,\frac12 A+\frac12 B)$, $\mu(3,\frac13,AABB)$, and $\mu(3,\frac13,ABABB)$.  

How large can $\mu(r,\rho,\gamma A+(1-\gamma)B)$ and $\mu(r,\rho,\Gamma(A,B))$ be?  The answer, at least in the second case, is that it can be arbitrarily close to $1$ (i.e., $100$\%):

\begin{theorem}[Ethier and Lee (2019)]\label{sup=1,periodic}
\begin{equation*}
\sup_{r\ge3,\;\rho\in(0,1),\;\Gamma(A,B)\text{ arbitrary}}\mu(r,\rho,\Gamma(A,B))=1.
\end{equation*}
\end{theorem}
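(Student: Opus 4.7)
My plan is to exhibit, for each $\epsilon>0$, concrete choices of $r$, $\rho$, and $\Gamma$ with $\mu(r,\rho,\Gamma)>1-\epsilon$; the upper bound $\mu\le 1$ is automatic since capital changes by $\pm 1$ each turn. I would try the family $\Gamma=AB^m$ of period $m+1$. The heuristic is that for small $\rho$, game $B$ acts almost deterministically: at a capital not divisible by $r$ the $p_1=1/(1+\rho)$-coin almost surely wins (moving capital up by one), while at a multiple of $r$ the $p_0$-coin almost surely loses (moving it down). Consequently, a period starting at a state $k$ with $1\le k\bmod r\le r-1-m$ produces, with probability close to $1$, gain $m\pm 1$ depending on the $A$ outcome, for an average per-step gain near $m/(m+1)$, which tends to $1$ as $m\to\infty$.

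To formalize this, I would analyze the Markov chain on $\mathbb{Z}_r$ giving the capital modulo $r$ at the start of each period. Away from a \emph{bad band} of $O(m)$ states near $0\bmod r$, the period-to-period transition (in the $\rho\to 0$ limit) moves the state by $m+1$ or $m-1$ with equal probability, i.e., it is a translation-invariant random walk on $\mathbb{Z}_r$. A gcd check shows this walk is irreducible whenever $m$ is even, or when both $r$ and $m$ are odd; in those cases the uniform distribution would be stationary if the bad band were absent. I would show that the actual stationary distribution $\pi$ of the full chain assigns total mass $O(m/r)$ to the bad band. Combining the good-state gain (approximately $m$ per period, with an $O(m^2\rho)$ correction for finite $\rho$) with the crude bound $|\text{period gain}|\le m+1$ for bad states yields
$$
\mu(r,\rho,AB^m)\ge\frac{m}{m+1}-C\!\left(\frac{m}{r}+m\rho\right)
$$
for an absolute constant $C$. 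Taking, for example, $m\to\infty$ with $r=m^2$ and $\rho=m^{-2}$ then gives $\mu\to 1$.

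The main obstacle is justifying the bound $\pi(\text{bad})=O(m/r)$, because near the bad band the chain does not behave as a translation-invariant walk: a period starting at $0\bmod r$ sends the state via the $A=-1$ branch into a $B$-driven oscillation between $0$ and $r-1$ rather than into the drift-$m$ regime. One must show that this irregular behavior does not trap the chain or concentrate stationary mass on bad states. I would attempt this either via a coupling with the translation-invariant random walk on the good states, or via a direct estimate of the expected return time from the bad band to a fixed good state; in principle one could also derive it from explicit stationary-distribution formulas for the mod-$r$ chain. Once the bad-band mass is controlled, the rate-of-profit computation is a routine stationary expectation.
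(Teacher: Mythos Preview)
The paper does not prove this theorem here; it is quoted from the authors' earlier paper and is stated to follow from Theorem~\ref{rate-periodic}, which gives the \emph{exact} closed form
\[
\mu(r,0,(AB)^sB^{r-2})=\frac{r}{2s+r-2}\cdot\frac{2^s-1}{2^s+1}
\]
for odd $r$. Taking $s\sim\log_2 r$ and $r\to\infty$ makes both factors tend to $1$, and continuity in $\rho$ at $0$ (a rational-function argument, as in the proof of Theorem~\ref{sup=1,random}) then yields the supremum over $\rho\in(0,1)$.

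Your route is genuinely different: you use the short pattern $AB^m$ with $m\ll r$, whereas the paper uses a pattern whose length is essentially $r$. That choice of long pattern is the crucial simplification you are missing. At $\rho=0$ the block $B^{r-2}$, applied from any state, funnels the chain into $\{0,r-1\}$ (it climbs to $0$ and then oscillates), so the start-of-period chain for $(AB)^sB^{r-2}$ lives on a set of bounded size independent of $r$. This is what makes an exact formula possible and eliminates any need to estimate stationary mass on a ``bad band.'' By contrast, your period chain for $AB^m$ genuinely lives on $\mathbb{Z}_r$, and the whole argument hinges on the unproved bound $\pi(\text{bad})=O(m/r)$.

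That gap is real and not merely cosmetic. At $\rho=0$ the period map for $AB^m$ is \emph{not} a small perturbation of the translation-invariant walk $k\mapsto k+m\pm1$: from the bad band the chain is absorbed into the two-state set $\{0,r-1\}$ and re-injected at the single state $m+1$, so the chain is not even irreducible on all of $\mathbb{Z}_r$, and the ``uniform-plus-$O(m/r)$'' heuristic for $\pi$ has no direct justification. A correct argument would have to identify the recurrent class, compute (or bound) return times to $0$ through one full sweep of the good region, and control how many bad-band periods occur per sweep; your coupling sketch does not address the absorbing behavior near $0$. Working instead at small $\rho>0$ restores irreducibility, but then the stationary distribution depends on $\rho$ in a way you would still need to quantify. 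None of this is insurmountable, but it is substantially more work than the paper's route, which bypasses all of it by choosing a pattern whose period chain is effectively two-state.
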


In the first case the question was left open, and it is the aim of this paper to resolve that issue.  It turns out that the conclusion is the same:

\begin{theorem}\label{sup=1,random}
\begin{equation*}
\sup_{r\ge3,\;\rho\in(0,1),\;\gamma\in(0,1)}\mu(r,\rho,\gamma A+(1-\gamma)B)=1.
\end{equation*}
\end{theorem}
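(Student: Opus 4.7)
The plan is to compute the rate of profit via the stationary distribution of the induced Markov chain on the $r$-cycle, then exhibit a sequence of parameter choices along which that rate approaches $1$.

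Under the random mixture $\gamma A+(1-\gamma)B$, the capital process $X_n$ has the property that $Y_n:=X_n\bmod r$ is an irreducible Markov chain on $\{0,1,\dots,r-1\}$ with forward transition probability $q_0:=\gamma/2+(1-\gamma)p_0$ at state $0$ and $q_1:=\gamma/2+(1-\gamma)p_1$ at each state $i\ne 0$. Since every step changes capital by $\pm 1$, the strong law of large numbers for Markov chains gives
$$\mu(r,\rho,\gamma A+(1-\gamma)B)=\pi_0(2q_0-1)+(1-\pi_0)(2q_1-1)=(1-\gamma)\bigl[(2p_1-1)-2\pi_0(p_1-p_0)\bigr],$$
where $\pi_0$ is the stationary probability of state $0$. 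By Kac's lemma, $\pi_0$ is the reciprocal of the expected return time to $0$, which decomposes as $1+q_0 T_1+(1-q_0)T_{r-1}$, where $T_i$ denotes the expected time for a biased random walk on $\{0,1,\dots,r\}$ with forward probability $q_1$ and absorbing endpoints to reach $\{0,r\}$ from $i$. Setting $\lambda:=q_1/(1-q_1)$, the standard gambler's-ruin recurrence $q_1 T_{i+1}-T_i+(1-q_1)T_{i-1}=-1$ with $T_0=T_r=0$ yields
$$T_i=\frac{1}{2q_1-1}\biggl[\frac{r(1-\lambda^{-i})}{1-\lambda^{-r}}-i\biggr],\qquad 1\le i\le r-1.$$

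The limiting argument then proceeds in stages. Fix $\gamma\in(0,1)$ and send $\rho\to 0^+$: then $p_0\to 0$, $p_1\to 1$, $q_0\to\gamma/2$, $q_1\to 1-\gamma/2$, and $\lambda\to(2-\gamma)/\gamma>1$. Because $\lambda$ is bounded away from $1$, the factor $\lambda^{-r}$ is negligible for large $r$, and the dominant contribution to the expected return time is $q_0 T_1\sim(\gamma/2)\cdot r/(2q_1-1)$, so $\pi_0=O(1/r)$. Given $\varepsilon>0$, one would first fix $\gamma<\varepsilon/3$, next choose $\rho$ small enough that $(1-\gamma)(2p_1-1)>1-\gamma-\varepsilon/3$, and finally choose $r$ large enough that $2(1-\gamma)\pi_0(p_1-p_0)<\varepsilon/3$; substitution into the displayed expression for $\mu$ then gives $\mu>1-\varepsilon$. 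Combined with the trivial upper bound $\mu\le 1$, this shows the supremum equals $1$.

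The main obstacle is the quantitative control of $\pi_0$ uniformly in the three varying parameters. By fixing $\gamma$ before sending $\rho\to 0$, the ratio $\lambda$ stays bounded away from $1$ by a $\gamma$-dependent constant, so the denominator $1-\lambda^{-r}$ in the formula for $T_i$ is bounded below uniformly in $r$ and $\rho$; this makes the $O(1/r)$ bound on $\pi_0$ robust, and the remaining algebra is then routine bookkeeping.
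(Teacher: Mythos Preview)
Your argument is correct and reaches the same conclusion, but the route differs from the paper's in two ways worth noting.

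First, the paper computes the stationary distribution by solving the balance equations for the general random walk on the $r$-cycle (Section~\ref{sec-cycle}), obtaining a closed form for $\pi_0$; you instead invoke Kac's lemma and reduce the mean return time to a gambler's-ruin absorption-time calculation. Both give $\pi_0=O(1/r)$ for fixed $\gamma$, and your approach is arguably more elementary since it avoids solving a linear recurrence with a free constant. (One minor slip: your asymptotic $q_0T_1\sim(\gamma/2)\cdot r/(2q_1-1)$ drops the factor $1-\lambda^{-1}=(2q_1-1)/q_1$; the correct leading term is $q_0 r/q_1$, but this only changes the constant and your $O(1/r)$ bound is unaffected.)

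Second, and more substantively, the paper organizes the limit differently: it first proves an \emph{exact} formula for $\mu(r,0,\gamma A+(1-\gamma)B)$ (Theorem~\ref{rate-random}), then takes $\gamma=\gamma_r=2/\sqrt{r}$ and lets $r\to\infty$ to get $\mu\to1$ (Corollary~\ref{rate-random-asymp}), and finally appeals to continuity of $\mu$ in $\rho$ at $\rho=0$ to transfer the result to $\rho\in(0,1)$. You instead work directly inside the open parameter region: fix $\gamma$ small, then $\rho$ small, then $r$ large, and never touch the boundary $\rho=0$. Your ordering sidesteps the continuity argument entirely, which is a genuine simplification for the bare supremum statement; the paper's route, on the other hand, yields the sharper asymptotic $1-\mu\sim4/\sqrt{r}$ along the diagonal $\gamma_r=2/\sqrt{r}$, which your decoupled limit does not see.
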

This will be seen to be a consequence of Corollary~\ref{rate-random-asymp} below.

We can compute $\mu(r, \rho, 􏰁\gamma A+(1-\gamma)B)$ and $\mu(r,\rho,\Gamma(A,B))$ for $r\ge3$, $\rho\in(0,1)$, $\gamma\in(0,1)$, and patterns $\Gamma(A,B)$. Indeed, the method of Ethier and Lee (2009) applies if $r$ is odd, and generalizations of it apply if $r$ is even; see Section 2 for details in the random mixture case and Ethier and Lee (2019) in the periodic pattern case. For example,
\begin{equation}\label{mu(3,rho,(A+B)/2)}
\mu(3, \rho, {\textstyle􏰁\frac12} A+{\textstyle\frac12} B)=\frac{9 (1 - \rho)^3 (1 + \rho)}{2 (35 + 70 \rho + 78 \rho^2 + 70 \rho^3 + 35 \rho^4)}
\end{equation}
and
\begin{equation}\label{mu(3,rho,AABB)}
\mu(3, \rho, AABB)=\frac{3 (1 - \rho)^3 (1 + \rho)}{8 (3 + 6 \rho + 7 \rho^2 + 6 \rho^3 + 3 \rho^4)}.
\end{equation}
These and other examples suggest that typically $\mu(r, \rho,􏰁\gamma A+(1-\gamma)B)$ and $\mu(r,\rho,\Gamma(A,B))$ are decreasing in $\rho$ (for fixed $r$, $\gamma$, and $\Gamma(A,B)$), hence maximized at $\rho = 0$. We excluded the case $\rho = 0$ in \eqref{rho-param}, but now we want to include it. We find from \eqref{mu(3,rho,(A+B)/2)} and \eqref{mu(3,rho,AABB)} that
\begin{equation*}
\mu(3, 0, {\textstyle􏰁\frac12} A+{\textstyle\frac12} B)=\frac{9}{70}\approx0.128571\quad\text{and}\quad\mu(3, 0,AABB)=\frac18=0.125,
\end{equation*} 
which are substantial increases over $\mu(3,\frac13,\frac12 A+\frac12 B)$ and $\mu(3,\frac13,AABB)$ (see \eqref{(A+B)/2} and \eqref{AABB}).  We can do slightly better by choosing $\gamma$ optimally:
\begin{equation*}
\mu(3, 0, \gamma A+(1-\gamma)B)=\frac{3 \gamma(1 - \gamma)(2 - \gamma)}{(2+\gamma)(4-\gamma)},
\end{equation*} 
so $\max_\gamma\mu(3, 0, \gamma A+(1-\gamma)B)\approx0.133369$, achieved at $\gamma\approx0.407641$.  Similarly, we can do considerably better by choosing the optimal pattern $ABABB$:
\begin{equation}\label{mu(3,0,ABABB)}
\mu(3,0,ABABB)=\frac{9}{25}=0.36.  
\end{equation}
Thus, we take $\rho = 0$ in what follows.

Theorem~\ref{sup=1,periodic} was shown to follow from the next theorem.

\begin{theorem}[Ethier and Lee (2019)]\label{rate-periodic}
Let $r\ge3$ be an odd integer and $s$ be a positive integer.  Then
\begin{equation*}\label{formula1}
\mu(r,0,(AB)^sB^{r-2})=\frac{r}{2s+r-2}\;\frac{2^s-1}{2^s+1},
\end{equation*}
regardless of initial capital.

Let $r\ge4$ be an even integer and $s$ be a positive integer.  Then
\begin{equation*}\label{formula2}
\mu(r,0,(AB)^sB^{r-2})=\begin{cases}\cfrac{r}{2s+r-2}\;\displaystyle{\sum_{k=0}^s\bigg\lceil\frac{2k}{r}\bigg\rceil\binom{s}{k}\frac{1}{2^s}}&\text{if initial capital is even},\\ \noalign{\medskip}
0&\text{if initial capital is odd}.\end{cases}
\end{equation*}
\end{theorem}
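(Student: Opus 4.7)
My plan is to realize the process as a Markov chain on the residues modulo $r$, observed at the end of each period of length $n = 2s + r - 2$, find its stationary distribution, and use the strong law of large numbers to write the rate of profit as $1/n$ times the stationary expected capital change per period. The key simplification when $\rho = 0$ is that game $B$ is deterministic: it loses $1$ at every multiple of $r$ and wins $1$ everywhere else, so the one-period kernel is completely explicit.

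First I would analyze a single $AB$-pair. A short case analysis on the residue $i$ entering the $A$-toss shows, for $i \in \{0, 2, 3, \ldots, r - 2\}$, that $AB$ either stays at $i$ with capital change $0$ or advances to $(i + 2) \bmod r$ with capital change $+2$ (each with probability $1/2$); for $i = 1$ the two outcomes are $(3, +2)$ and $(r - 1, -2)$; and for $i = r - 1$ both outcomes produce $(r - 1, 0)$, so $r - 1$ is absorbing throughout the $AB$-phase. The subsequent $B^{r - 2}$ tail is purely deterministic given the residue $j$ entering it: $B$'s win up to the first visit to a multiple of $r$, then oscillate between that multiple and the residue just below. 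Tabulating the resulting end-residue/capital-change pairs yields the crucial fact that the residue at the end of a full period is always $0$ or $r - 1$.

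For $r$ odd, $r - 1$ is even, so starting from residue $0$ the $s$ $AB$'s perform a stay-or-advance walk on the even residues $\{0, 2, \ldots, r - 1\}$ with $r - 1$ absorbing. Writing $K \sim \mathrm{Bin}(s, 1/2)$ for the count of advance choices, one checks that $K = 0$ (probability $2^{-s}$) yields capital change $-1$ and end residue $r - 1$, while every $K \geq 1$ yields capital change $+r$ and end residue $0$ (the sub-cases ``stopped at $2K$'' and ``absorbed at $r - 1$'' are reconciled by the deterministic $B^{r - 2}$ gains $r - 2K$ and $+1$ respectively). From residue $r - 1$ the whole period is deterministic, with capital change $+1$ and end residue $0$. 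The balance equations on $\{0, r - 1\}$ give $\pi_0 = 2^s/(2^s + 1)$ and $\pi_{r - 1} = 1/(2^s + 1)$, and a short arithmetic check produces an expected capital change of $r(2^s - 1)/(2^s + 1)$ per period; dividing by $n$ yields the first formula. Since any initial residue reaches $\{0, r - 1\}$ after a single period, the rate is independent of the initial capital.

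For $r$ even, every single game flips the parity of capital, so the period, which has even length $n$, preserves it, and the chain decomposes according to the parity of the initial capital. For odd initial capital the walk stays in odd residues, the end-of-period residue is always $r - 1$, and from $r - 1$ a whole period produces capital change $0$, giving rate $0$. For even initial capital the walker lives on the $r/2$ even residues, always lands at residue $0$ at the end of a period (so $0$ is the unique recurrent state at period boundaries), and the $AB$-phase is a pure stay-or-advance random walk on a cycle with no absorption; writing $W = \lfloor K/(r/2) \rfloor$ and $j = K \bmod (r/2)$ with $K \sim \mathrm{Bin}(s, 1/2)$ collapses ``capital gained from $AB$ plus capital gained from the deterministic $B^{r - 2}$ tail'' into the single expression $r \lceil 2K/r \rceil$, and averaging and dividing by $n$ yields the second formula. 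The main obstacle will be the careful bookkeeping at the exceptional residues $i \in \{1, r - 2, r - 1\}$ of the $AB$-transition and, in the $r$-even case, the ceiling identity that packages the two per-period contributions into a closed form.
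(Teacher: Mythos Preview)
Your argument is correct. Note, however, that there is nothing in the present paper to compare it against: Theorem~\ref{rate-periodic} is quoted from Ethier and Lee (2019) and is not reproved here, so your proposal is a self-contained proof rather than a reconstruction of one given in this paper.

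On the substance: the case analysis of a single $AB$-pair is accurate, including the absorbing behaviour at $r-1$ and the exceptional transition from residue $1$. For $r$ odd, your reduction to the two-state end-of-period chain on $\{0,r-1\}$ is valid because the $(AB)^s$ phase, started from $0$, stays on the even residues $\{0,2,\ldots,r-1\}$ with $r-1$ absorbing, and the deterministic $B^{r-2}$ tail sends $0\mapsto r-1$ (net $-1$), any even $j\in\{2,\ldots,r-1\}\mapsto 0$ (net $r-j$ or $+1$), and any odd $j\mapsto r-1$; so every initial residue lands in $\{0,r-1\}$ after one period, and the stationary computation $\pi_0=2^s/(2^s+1)$, $\pi_{r-1}=1/(2^s+1)$ with expected period gain $r(2^s-1)/(2^s+1)$ is exactly right. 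For $r$ even, the parity decomposition is correct; on odd residues $r-1$ is a fixed point of the period map with zero gain, giving rate $0$; on even residues the $(AB)^s$ phase is a free stay/advance walk on the $(r/2)$-cycle with total gain $2K$, and the $B^{r-2}$ tail from $j=2K\bmod r$ contributes $r-j$ if $j\ge2$ and $0$ if $j=0$, which together collapse to $r\lceil 2K/r\rceil$, yielding the stated formula after averaging over $K\sim\mathrm{Bin}(s,1/2)$ and dividing by $2s+r-2$.

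The only places requiring care are exactly the ones you flag: the residues $1$, $r-2$, $r-1$ in the $AB$ step, and the packaging of the two contributions in the even case into a single ceiling. All of these check out.
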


The special case $(r,s)=(3,2)$ of this theorem is consistent with \eqref{mu(3,0,ABABB)}.

Theorem~\ref{sup=1,random} will be seen to follow from the next two results, the proofs of which are deferred to Section \ref{sec-rate}.

\begin{theorem}\label{rate-random}
Let $r\ge3$ be an integer and $0<\gamma<1$. Then
\begin{equation*}
\mu(r,0,\gamma A+(1-\gamma)B)= \frac{r \gamma(1-\gamma)(2-\gamma)[(2-\gamma)^{r-2}-\gamma^{r-2}]}{2[(2-\gamma)^{r}-\gamma^{r}]+ r \gamma (2-\gamma) [(2-\gamma)^{r-2}-\gamma^{r-2}]},
\end{equation*}
regardless of initial capital.
\end{theorem}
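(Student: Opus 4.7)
The plan is to reduce the computation of $\mu$ to finding the stationary probability $\pi_0$ that capital is a multiple of $r$, and then to compute $\pi_0$ via a standard gambler's-ruin calculation.

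When $\rho=0$, we have $p_0=0$ and $p_1=1$, so game~$B$ is deterministic given the current capital: a sure loss at multiples of $r$, a sure win elsewhere. Writing $\xi_n$ for capital after $n$ turns, the residue chain $S_n:=\xi_n\bmod r$ is therefore a Markov chain on $\{0,1,\ldots,r-1\}$ whose transitions, with $q:=\gamma/2$ and $p:=1-q$, are: from $0$, go to $1$ with probability $q$ and to $r-1$ with probability $p$; from any $k\ne 0$, go to $k+1\bmod r$ with probability $p$ and to $k-1$ with probability $q$. For $\gamma\in(0,1)$ this chain is irreducible and has a unique stationary distribution $\pi$. The one-step expected payoff given $S_n$ equals $-(1-\gamma)$ at state $0$ and $+(1-\gamma)$ at every other state, so the strong law of large numbers for finite-state Markov chains (which applies to time averages regardless of periodicity, the latter being an issue precisely when $r$ is even) gives $\xi_N/N\to(1-\gamma)(1-2\pi_0)$ almost surely, from any initial capital. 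It remains to compute $\pi_0$.

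I would use $\pi_0=1/E_0[T_0]$ with $T_0:=\min\{n\ge 1:S_n=0\}$, and condition on the first step to obtain
$$
E_0[T_0]=1+q\,E_1[T_0]+p\,E_{r-1}[T_0].
$$
From any state $k\in\{1,\ldots,r-1\}$ the chain evolves as a biased random walk (right with probability $p$, left with probability $q$) until it reaches $0$, which may happen either from $1$ or from $r-1$; lifting to $\{0,1,\ldots,r\}$ and identifying $r$ with $0$ shows that $E_k[T_0]$ is the expected absorption time in the classical gambler's-ruin problem,
$$
h_k=\frac{1}{p-q}\bigg[k-\frac{r(1-\lambda^k)}{1-\lambda^r}\bigg],\qquad\lambda:=q/p=\gamma/(2-\gamma).
$$

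What remains is an algebraic simplification. Substituting $k=1,r-1$, clearing the powers of $(2-\gamma)$ via $1-\lambda^k=[(2-\gamma)^k-\gamma^k]/(2-\gamma)^k$, and assembling $1+qh_1+ph_{r-1}$ should yield
$$
E_0[T_0]=\frac{2[(2-\gamma)^r-\gamma^r]+r\gamma(2-\gamma)\big[(2-\gamma)^{r-2}-\gamma^{r-2}\big]}{(2-\gamma)^r-\gamma^r},
$$
after which $\mu=(1-\gamma)(1-2/E_0[T_0])$ is the claimed formula. I expect the bookkeeping here to be the main obstacle: combining the $(2-\gamma)^{r-1}$ pieces of $q h_1$ and $p h_{r-1}$ with the integer terms from the ``$k$'' part of $h_k$ and with the initial~$1$, one has to recognise that after the common factor $(p-q)=1-\gamma$ is cleared, the numerator collapses into $2(1-\gamma)$ times precisely $2[(2-\gamma)^r-\gamma^r]+r\gamma(2-\gamma)[(2-\gamma)^{r-2}-\gamma^{r-2}]$, so that a factor $2(1-\gamma)$ cancels against the same factor produced in the denominator. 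A consistency check against \eqref{mu(3,rho,(A+B)/2)} at $\rho=0$, $\gamma=1/2$, which gives $\mu=9/70$, should confirm the result for $r=3$.
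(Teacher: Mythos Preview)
Your approach is sound and reaches the same endpoint as the paper, but by a genuinely different route. Both arguments reduce to $\mu=(p-q)(1-2\pi_0)$ with $p=1-\gamma/2$, $q=\gamma/2$; the paper then computes $\pi_0$ by solving the stationary equations for a general nearest-neighbour walk on the $r$-cycle (a first-order linear recursion for $\pi_i$ in terms of $\pi_{i-1}$), specialising to the case $p_0=q$, $p_1=\cdots=p_{r-1}=p$ to obtain $1/\pi_0=2+rpq(p^{r-2}-q^{r-2})/(p^r-q^r)$. You instead invoke Kac's formula $\pi_0=1/E_0[T_0]$ and compute the mean return time via first-step analysis and the classical gambler's-ruin absorption time. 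Your route is arguably more elementary, since it reuses a textbook formula rather than deriving the full stationary distribution on the cycle; the paper's route has the advantage of giving every $\pi_i$ and of covering the $\rho>0$ case in the same framework. Your remark that the ergodic theorem for irreducible finite chains already handles the periodic ($r$ even) case is correct and in fact sidesteps the extra work the paper does in extending its earlier SLLN.

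One concrete slip to fix before you carry out the algebra: your stated gambler's-ruin formula has the wrong sign. With $\lambda=q/p<1$ and absorbing barriers at $0$ and $r$, the expected absorption time from $k$ is
\[
h_k=\frac{1}{p-q}\left[\frac{r(1-\lambda^k)}{1-\lambda^r}-k\right],
\]
not $\dfrac{1}{p-q}\big[k-r(1-\lambda^k)/(1-\lambda^r)\big]$; your version would make $h_k$ negative. With the corrected sign, $1+qh_1+ph_{r-1}$ does indeed simplify (after multiplying through by $p^{r-1}(1-\lambda^r)=(p^r-q^r)/p$) to $[2(p^r-q^r)+rpq(p^{r-2}-q^{r-2})]/(p^r-q^r)$, which is your claimed $E_0[T_0]$; the numerical check at $r=3$, $\gamma=1/2$ gives $E_0[T_0]=35/13$ and $\mu=9/70$ as expected.
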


\begin{corollary}\label{rate-random-asymp}
For each integer $r\ge3$, define $\gamma_r:=2/\sqrt{r}$. Then
\begin{equation*}
1-\mu(r,0,\gamma_r A+(1-\gamma_r)B)\sim2\gamma_r\text{ as }r\to\infty,
\end{equation*}
regardless of initial capital.
\end{corollary}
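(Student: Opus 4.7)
The plan is to start from the explicit formula in Theorem~\ref{rate-random} and manipulate $1-\mu(r,0,\gamma A+(1-\gamma)B)$ into a form whose $r\to\infty$ behavior is transparent.  Writing $\mu=N/D$ with $N$ the stated numerator and $D$ its denominator, a direct subtraction exploiting the cancellation $1-(1-\gamma)=\gamma$ in the coefficient of $r\gamma(2-\gamma)[(2-\gamma)^{r-2}-\gamma^{r-2}]$ gives
\begin{equation*}
D-N=2[(2-\gamma)^{r}-\gamma^{r}]+r\gamma^{2}(2-\gamma)[(2-\gamma)^{r-2}-\gamma^{r-2}].
\end{equation*}
Setting $\alpha:=\gamma/(2-\gamma)$ and pulling a factor $(2-\gamma)^{r-1}$ out of both $D-N$ and $D$ then yields
\begin{equation*}
1-\mu(r,0,\gamma A+(1-\gamma)B)=\frac{2(2-\gamma)(1-\alpha^{r})+r\gamma^{2}(1-\alpha^{r-2})}{2(2-\gamma)(1-\alpha^{r})+r\gamma(1-\alpha^{r-2})}.
\end{equation*}

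Next I would specialize to $\gamma=\gamma_r=2/\sqrt{r}$.  Then $\alpha=1/(\sqrt{r}-1)\sim 1/\sqrt{r}$, so $\alpha^{r-2}$ decays faster than any power of $r$, and both $1-\alpha^{r}$ and $1-\alpha^{r-2}$ may be replaced by $1$ with asymptotically negligible error.  After this replacement, the identities $r\gamma_r^{2}=4$, $r\gamma_r=2\sqrt{r}$, and $2(2-\gamma_r)\to 4$ can be read off directly, producing
\begin{equation*}
1-\mu(r,0,\gamma_r A+(1-\gamma_r)B)\sim\frac{4+4}{4+2\sqrt{r}}\sim\frac{4}{\sqrt{r}}=2\gamma_r,
\end{equation*}
which is the claim.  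The independence from the initial capital is inherited verbatim from Theorem~\ref{rate-random}.

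The only place requiring real care is the algebraic simplification of $D-N$: it is the cancellation leaving behind an extra factor of $\gamma$ that forces $1-\mu$ to be of order $1/\sqrt{r}$ rather than of constant order, and this order is precisely what makes $\gamma_r=2/\sqrt{r}$ the correct scaling, balancing the $r\gamma^{2}=O(1)$ contribution in the numerator against the dominant $r\gamma=O(\sqrt{r})$ contribution in the denominator.  Beyond this observation, the argument is a routine asymptotic extraction from a closed-form rational function, with the super-exponential decay of $\alpha^{r-2}$ making the correction terms entirely harmless.
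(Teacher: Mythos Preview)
Your proof is correct.  The approach is essentially the same as the paper's---both start from the closed form in Theorem~\ref{rate-random} and extract the leading asymptotics after substituting $\gamma=2/\sqrt{r}$---but the bookkeeping differs slightly: the paper rewrites $\mu$ as $(p-q)(1-2\pi_0)$ with $p=1-\gamma/2$ and uses the asymptotic $\pi_0\sim p/(2p+rq)$, whereas you work directly with the rational function $N/D$, compute $D-N$, and factor out $(2-\gamma)^{r-1}$ via $\alpha=\gamma/(2-\gamma)$.  Your route is arguably tidier, since it makes the crucial cancellation $1-(1-\gamma)=\gamma$ in $D-N$ explicit and thereby explains transparently why $\gamma_r\asymp 1/\sqrt{r}$ is the balancing scale; the paper's version leaves that observation implicit in the formula for $\pi_0$.
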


Table~\ref{rates-random} illustrates these results.

\begin{table}[H]
\caption{\label{rates-random}The rate of profit $\mu(r,0,\gamma A+(1-\gamma)B)$.}
\catcode`@=\active \def@{\hphantom{0}}
\tabcolsep=.2cm
\small
\begin{center}
\begin{tabular}{rcccccc}
\hline
\noalign{\smallskip}
$r$ &@& @$\argmax_\gamma\, \mu$@@ & $1-\max_\gamma \mu$ &@@&  $\gamma_r:=2/\sqrt{r}$ & $1-\mu$ at $\gamma=\gamma_r$   \\
\noalign{\smallskip}
\hline
\noalign{\smallskip}
10 && 0.366017@@ & 0.665064@@ && 0.632456@@ & 0.743544@@ \\
100 && 0.165296@@ & 0.316931@@ && 0.200000@@ & 0.322034@@ \\
1000 && 0.0594276@ & 0.117089@@ && 0.0632456@ & 0.117307@@ \\
10000 && 0.0196059@ & 0.0390196@ && 0.0200000@ & 0.0390273@ \\
100000 && 0.00628474 & 0.0125497@ && 0.00632456 & 0.0125500@ \\
1000000 && 0.00199601 & 0.00399002 && 0.00200000 & 0.00399003 \\
\noalign{\smallskip}
\hline
\end{tabular}
\end{center}
\end{table}

For the purpose of comparison, let us state a corollary to Theorem~\ref{rate-periodic} that is analogous to Corollary~\ref{rate-random-asymp}.

\begin{corollary}\label{rate-periodic-asymp}
For each integer $r\ge3$, define $s_r:=\lfloor\log_2 r\rfloor-1$. Then
\begin{equation*}
1-\mu(r,0,(AB)^{s_r}B^{r-2})\sim\frac{2s_r}{r}\text{ as }r\to\infty,
\end{equation*}
assuming initial capital is even if $r$ is even,
\end{corollary}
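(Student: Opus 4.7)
The plan is to substitute $s=s_r$ into the explicit formulas of Theorem~\ref{rate-periodic} and perform asymptotics. The key quantitative facts about $s_r$ that drive the argument are: since $s_r+1=\lfloor\log_2 r\rfloor$ satisfies $s_r+1\le\log_2 r<s_r+2$, we have
$$
r/4<2^{s_r}\le r/2,\qquad\text{so }\quad 2^{-s_r}=\Theta(1/r).
$$
In particular $s_r\to\infty$ but $s_r/r\sim(\log_2 r)/r\to 0$, while $2^{-s_r}=O(1/r)=o(s_r/r)$.

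For odd $r\ge 3$, Theorem~\ref{rate-periodic} gives directly
$$
\mu(r,0,(AB)^{s_r}B^{r-2})=\frac{r}{2s_r+r-2}\cdot\frac{2^{s_r}-1}{2^{s_r}+1}=(1-a_r)(1-b_r),
$$
with $a_r=(2s_r-2)/(2s_r+r-2)$ and $b_r=2/(2^{s_r}+1)$. Since $a_r\sim 2s_r/r$ and $b_r=O(2^{-s_r})=o(s_r/r)$, the identity $1-(1-a_r)(1-b_r)=a_r+b_r-a_rb_r$ yields $1-\mu\sim 2s_r/r$.

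For even $r\ge 4$, I first reduce the sum in Theorem~\ref{rate-periodic}. Since $s_r+1\le\log_2 r$, one easily checks $2s_r\le r$ for every $r\ge 4$, hence $2k/r\le 1$ for $0\le k\le s_r$. Therefore $\lceil 2k/r\rceil=0$ if $k=0$ and $\lceil 2k/r\rceil=1$ otherwise, so
$$
\sum_{k=0}^{s_r}\bigg\lceil\frac{2k}{r}\bigg\rceil\binom{s_r}{k}\frac{1}{2^{s_r}}=\frac{2^{s_r}-1}{2^{s_r}}.
$$
The formula then becomes $\mu=\dfrac{r}{2s_r+r-2}\cdot\dfrac{2^{s_r}-1}{2^{s_r}}=(1-a_r)(1-b_r')$ with the same $a_r$ as before and $b_r'=2^{-s_r}=O(1/r)=o(s_r/r)$, giving again $1-\mu\sim 2s_r/r$.

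There is no genuine obstacle; Theorem~\ref{rate-periodic} is an exact formula, so the only step requiring care is the even-$r$ simplification, which hinges on the uniform bound $2s_r\le r$ ensuring that every ceiling in the sum equals $0$ or $1$. Once that is in hand, the asymptotic estimate is a two-term expansion of a product.
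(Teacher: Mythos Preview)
Your proof is correct and follows essentially the same approach as the paper's: both substitute $s=s_r$ into Theorem~\ref{rate-periodic}, simplify the even-$r$ ceiling sum via the bound $s_r\le r/2$, factor $\mu=(1-a_r)(1-b_r)$, and observe that $a_r\sim 2s_r/r$ while $b_r=O(1/r)=o(s_r/r)$. The only cosmetic difference is that the paper treats the even case first and splits $a_r$ into two pieces so that the leading term is literally $\dfrac{2s_r}{r+2(s_r-1)}$, whereas you keep $a_r=\dfrac{2(s_r-1)}{r+2(s_r-1)}$ and invoke $s_r-1\sim s_r$.
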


Table~\ref{rates-periodic} illustrates Theorem~\ref{rate-periodic} and Corollary~\ref{rate-periodic-asymp}.

\begin{table}[H]
\caption{\label{rates-periodic}The rate of profit $\mu(r,0,(AB)^s B^{r-2})$, assuming initial capital is even.}
\catcode`@=\active \def@{\hphantom{0}}
\tabcolsep=.2cm
\small
\begin{center}
\begin{tabular}{rcccc}
\hline
\noalign{\smallskip}
$r$ &@& @$\argmax_s \mu$@@ & $1-\max_s \mu$ &  $s_r:=\lfloor\log_2 r\rfloor-1$   \\
\noalign{\smallskip}
\hline
\noalign{\smallskip}
10 && $2,3$ & 0.375000@@@@ & 2  \\
100 && 5 & 0.103009@@@@ & 5  \\
1000 && 8 & 0.0176590@@@ & 8  \\
10000 && 12 & 0.00243878@@ & 12  \\
100000 && 15 & 0.000310431@ & 15  \\
1000000 && 18 & 0.0000378134 & 18  \\
\noalign{\smallskip}
\hline
\end{tabular}
\end{center}
\end{table}

Ethier and Lee (2019) remarked that the rates of profit of periodic sequences tend to be larger than those of random sequences.  Corollaries~\ref{rate-random-asymp} and \ref{rate-periodic-asymp} yield a precise formulation of this conclusion.

\section{SLLN for random sequences of games}\label{sec-SLLN}

Ethier and Lee (2009) proved a strong law of large numbers (SLLN) and a central limit theorem for random sequences of Parrondo games.  It is only the SLLN that is needed here. 

\begin{theorem}[Ethier and Lee (2009)]\label{SLLN}
Let $\bm P$ be the transition matrix for a Markov chain in a finite state space $\Sigma$. Assume that $\bm P$ is irreducible and aperiodic, and let the row vector $\bm\pi$ be the unique stationary distribution of $\bm P$.  Given a real-valued function $w$ on $\Sigma\times\Sigma$,  define the payoff matrix $\bm W:=(w(i,j))_{i,j\in\Sigma}$, and put
\begin{equation*}\label{mean-formula}
\mu:=\bm\pi\dot{\bm P}\bm1,
\end{equation*}
where $\dot{\bm P}:=\bm P\circ\bm W$ (the Hadamard, or entrywise, product), and $\bm1$ denotes a column vector of $1$s with entries indexed by $\Sigma$.  Let $\{X_n\}_{n\ge0}$ be a Markov chain in $\Sigma$ with transition matrix $\bm P$, and let the initial distribution be arbitrary.  For each $n\ge1$, define $\xi_n:=w(X_{n-1},X_n)$ and $S_n:=\xi_1+\cdots+\xi_n$.  Then $\lim_{n\to\infty}n^{-1}S_n=\mu$ a.s.
\end{theorem}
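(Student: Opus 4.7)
The plan is to realize $S_n$ as a Birkhoff-type sum along the pair chain $Y_n:=(X_{n-1},X_n)$, $n\ge1$, and then invoke the standard SLLN for functionals of a finite-state irreducible Markov chain. I would first observe that $\{Y_n\}_{n\ge1}$ is itself a Markov chain on the finite set $\tilde\Sigma:=\{(i,j)\in\Sigma\times\Sigma:P_{ij}>0\}$ with transition probabilities $\tilde P_{(i,j),(k,\ell)}=\delta_{jk}\,P_{j\ell}$. Irreducibility and aperiodicity of $\bm P$ lift to $\tilde{\bm P}$ on $\tilde\Sigma$ by concatenating transitions, and the direct calculation
\[
\sum_{(i,j)}\pi_i P_{ij}\,\tilde P_{(i,j),(k,\ell)}=\sum_i\pi_i P_{ik}P_{k\ell}=\pi_k P_{k\ell},
\]
combined with the stationarity of $\bm\pi$, shows that $\tilde\pi_{(i,j)}:=\pi_i P_{ij}$ is the unique stationary distribution of $\tilde{\bm P}$.

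Since $\xi_n=w(Y_n)$ is a bounded deterministic function of the pair-chain state, the classical SLLN for finite-state irreducible Markov chains then gives
\[
\frac1n S_n=\frac1n\sum_{k=1}^n w(Y_k)\;\longrightarrow\;\sum_{(i,j)\in\tilde\Sigma}\tilde\pi_{(i,j)}w(i,j)=\sum_{i,j\in\Sigma}\pi_i P_{ij}w(i,j)=\bm\pi\dot{\bm P}\bm1=\mu
\]
almost surely, and this limit does not depend on the initial distribution of $X_0$ (equivalently, of $Y_1$).

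I would establish that classical SLLN by the standard regenerative argument. Fix any reference state $z\in\tilde\Sigma$, let $\tau_1<\tau_2<\cdots$ be the successive return times of $\{Y_n\}$ to $z$ (finite a.s., with finite means, because the chain is finite and irreducible), and note that the block sums $\sum_{\tau_{k-1}<m\le\tau_k}w(Y_m)$ and the block lengths $\tau_k-\tau_{k-1}$ form i.i.d.\ sequences with finite means $\mu/\tilde\pi_{(z)}$ and $1/\tilde\pi_{(z)}$ respectively. Applying the ordinary Kolmogorov SLLN to numerator and denominator, while absorbing the finite pre-$\tau_1$ remainder, yields the stated conclusion and makes the independence from the initial distribution of $X_0$ transparent, since the chain hits $z$ in finite time almost surely from any starting point.

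Nothing in this argument is delicate once the pair chain is identified; the only piece of bookkeeping is the restriction to $\tilde\Sigma$ rather than all of $\Sigma\times\Sigma$ (which is needed to keep the lifted chain irreducible), and the verification that the lifted stationary weights $\tilde\pi_{(i,j)}=\pi_i P_{ij}$ integrate $w$ to exactly $\bm\pi\dot{\bm P}\bm1$. Thus the ``hard part'' is more notational than mathematical: the payoff formula $\mu=\bm\pi\dot{\bm P}\bm1$ is simply the stationary mean of $w$ with respect to the pair chain, repackaged in matrix form via the Hadamard product.
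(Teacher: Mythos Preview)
Your argument is correct. Note, however, that the paper does not actually prove this theorem; it is quoted from Ethier and Lee (2009), and only the period-$2$ extension (Theorem~\ref{SLLN2}) is proved here. From that proof and the surrounding remarks one can read off the intended method: under irreducibility and aperiodicity the chain started from $\bm\pi$ is a stationary strong-mixing sequence (Bradley (2005), Theorem~3.1), so the SLLN follows from the ergodic theorem for such sequences, and a separate coupling argument then removes the dependence on the initial distribution.

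Your route is genuinely different and more elementary. You lift to the pair chain $Y_n=(X_{n-1},X_n)$ on $\tilde\Sigma=\{(i,j):P_{ij}>0\}$, identify its stationary law $\tilde\pi_{(i,j)}=\pi_iP_{ij}$, and then run the classical regenerative (renewal) decomposition with i.i.d.\ excursion sums and lengths; this is exactly the alternative the paper alludes to when it cites Pyke (2003) for a renewal-theoretic proof. The payoff is that you avoid any appeal to mixing theory and the independence from the initial distribution is automatic once the first return time to $z$ is a.s.\ finite. The strong-mixing approach, by contrast, is what Ethier and Lee (2009) need anyway for the companion central limit theorem, so in their setting it is the natural tool; for the SLLN alone your argument is cleaner. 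Incidentally, the pair-chain construction you use reappears, in a two-step variant $(X_{2n},X_{2n+1})$, in the paper's proof of Theorem~\ref{SLLN2}, where it is forced by the period-$2$ structure.
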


We wish to apply Theorem~\ref{SLLN} with 
\begin{equation*}\label{Sigma}
\Sigma=\{0,1,\ldots,r-1\}
\end{equation*}
($r$ is the modulo number in game $B$), $\bm P:=\gamma \bm P_A+(1-\gamma)\bm P_B$, where
the $r\times r$ transition matrices $\bm P_A$ and $\bm P_B$ are given by
\begin{equation*}\label{PA}
{\bm P}_A=\begin{pmatrix}0&\frac12&0&0&\cdots&0&0&0&\frac12\\
\frac12&0&\frac12&0&\cdots&0&0&0&0\\
0&\frac12&0&\frac12&\cdots&0&0&0&0\\
\vdots&\vdots&\vdots&\vdots& &\vdots&\vdots&\vdots&\vdots\\
0&0&0&0&\cdots&\frac12&0&\frac12&0\\
0&0&0&0&\cdots&0&\frac12&0&\frac12\\
\frac12&0&0&0&\cdots&0&0&\frac12&0\end{pmatrix}
\end{equation*}
and
\begin{equation*}\label{PB}
{\bm P}_B=\begin{pmatrix}0&p_0&0&0&\cdots&0&0&0&q_0\\
q_1&0&p_1&0&\cdots&0&0&0&0\\
0&q_1&0&p_1&\cdots&0&0&0&0\\
\vdots&\vdots&\vdots&\vdots& &\vdots&\vdots&\vdots&\vdots\\
0&0&0&0&\cdots&q_1&0&p_1&0\\
0&0&0&0&\cdots&0&q_1&0&p_1\\
p_1&0&0&0&\cdots&0&0&q_1&0\end{pmatrix}
\end{equation*}
with $p_0$ and $p_1$ as in \eqref{rho-param} and $q_0:=1-p_0$ and $q_1:=1-p_1$, and the $r\times r$ payoff matrix $\bm W$ is given by
\begin{equation*}\label{W}
\bm W=\begin{pmatrix}0&1&\phantom{_0}0\phantom{_0}&\phantom{_0}0\phantom{_0}&\cdots&0&0&0&-1\\
-1&0&1&0&\cdots&0&0&0&0\\
0&-1&0&1&\cdots&0&0&0&0\\
\vdots&\vdots&\vdots&\vdots& &\vdots&\vdots&\vdots&\vdots\\
0&0&0&0&\cdots&-1&0&1&0\\
0&0&0&0&\cdots&0&-1&0&1\\
1&0&0&0&\cdots&0&0&-1&0\end{pmatrix}.
\end{equation*}

The transition matrix $\bm P$ is irreducible and aperiodic if $r$ is odd, in which case the theorem applies directly.  But if $r$ is even, then $\bm P$ is irreducible and periodic with period 2.  In that case we need the following extension of Theorem~\ref{SLLN}.

\begin{theorem}\label{SLLN2}
Theorem~\ref{SLLN} holds with ``is irreducible and aperiodic'' replaced by ``is irreducible and periodic with period $2$''.
\end{theorem}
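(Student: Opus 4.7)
The plan is to reduce Theorem~\ref{SLLN2} to Theorem~\ref{SLLN} by passing to a two-step auxiliary chain on which aperiodicity is restored. Since $\bm P$ is irreducible with period~$2$, the state space decomposes as $\Sigma=\Sigma_0\sqcup\Sigma_1$ with $\bm P$ sending $\Sigma_\epsilon$ into $\Sigma_{1-\epsilon}$, and $\bm P^2$ is irreducible and aperiodic on each $\Sigma_\epsilon$. By conditioning on $X_0$, it suffices to handle the two cases $X_0\in\Sigma_0$ and $X_0\in\Sigma_1$ separately; these are symmetric, so I will assume $X_0\in\Sigma_0$.

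Set $Y_k:=(X_{2k},X_{2k+1})$ for $k\ge0$. I expect $Y$ to be a Markov chain on $E_0:=\{(i,j)\in\Sigma_0\times\Sigma_1:P_{ij}>0\}$ with transition kernel $\tilde{\bm P}((i,j),(l,m))=P_{jl}P_{lm}$, to be irreducible and aperiodic on $E_0$ (inherited from $\bm P^2$ on $\Sigma_0$ together with the irreducibility of $\bm P$), and to have stationary distribution $\tilde{\bm\pi}(i,j)=2\pi_i P_{ij}$; the factor~$2$ appears because $\bm\pi|_{\Sigma_0}$ has total mass $\tfrac12$. I then plan to apply Theorem~\ref{SLLN} to $Y$ with payoff $\tilde w((i,j),(l,m)):=w(j,l)+w(l,m)$, chosen so that the one-step increment of $Y$ is $\tilde\xi_k=\xi_{2k}+\xi_{2k+1}$, where $\xi_n:=w(X_{n-1},X_n)$. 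Telescoping yields $\sum_{k=1}^n\tilde\xi_k=S_{2n+1}-\xi_1$, and a direct computation using $\bm\pi\bm P=\bm\pi$ should give $\tilde\mu:=\tilde{\bm\pi}\,\dot{\tilde{\bm P}}\bm 1=2\mu$.

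Theorem~\ref{SLLN} then yields $n^{-1}(S_{2n+1}-\xi_1)\to 2\mu$ a.s., hence $S_{2n+1}/(2n+1)\to\mu$; since $w$ is bounded, the identity $S_{2n}=S_{2n+1}-\xi_{2n+1}$ gives the same limit along the even subsequence, and combining the two implies $n^{-1}S_n\to\mu$ a.s. The step I expect to be most delicate is the bookkeeping that produces $\tilde\mu=2\mu$ rather than, say, $4\mu$: one must carefully split $\mu=\bm\pi\,\dot{\bm P}\bm 1$ into the cross-class contributions from the transitions $\Sigma_0\to\Sigma_1$ and $\Sigma_1\to\Sigma_0$ and track how each piece enters $\tilde\mu$ once the factor of~$2$ in $\tilde{\bm\pi}$ is absorbed against the factor of $\tfrac12$ lost in restricting $\bm\pi$ to $\Sigma_0$. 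The remainder of the argument is mechanical once the pair chain is set up.
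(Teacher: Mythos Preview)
Your proposal is correct and follows essentially the same route as the paper: decompose $\Sigma$ into its two cyclic classes, pass to the pair chain $(X_{2k},X_{2k+1})$ on $\{(i,j):P_{ij}>0\}$ with transition $P_{jl}P_{lm}$ and stationary law $\pi_1(i)P_{ij}=2\pi_iP_{ij}$, verify irreducibility/aperiodicity, and compute that the two-step mean payoff equals $2\bm\pi\dot{\bm P}\bm1$. The only cosmetic differences are that the paper uses the payoff $w(i,j)+w(j,k)$ (capturing $S_{2n}$ directly) rather than your $w(j,l)+w(l,m)$ (capturing $S_{2n+1}-\xi_1$), and the paper re-invokes the underlying strong-mixing SLLN plus a coupling argument for arbitrary initial states, whereas you more economically apply Theorem~\ref{SLLN} to $Y$ directly, which already permits an arbitrary initial distribution.
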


We remark that an alternative proof of a strong law of large numbers for Parrondo games could be based on the renewal theorem; see Pyke (2003).

\begin{proof}
The irreducibility and aperiodicity in Theorem~\ref{SLLN} ensures that the Markov chain, with initial distribution equal to the unique stationary distribution, is a stationary strong mixing sequence (Bradley (2005), Theorem 3.1).  Here we must deduce this property in a different way.

The assumption that $\bm P = (P_{ij})_{i,j \in \Sigma}$ is irreducible with period $2$ implies that $\Sigma$ is the disjoint union of $\Sigma_1$ and $\Sigma_2$, and transitions under $\bm P$ take $\Sigma_1$ to $\Sigma_2$ and $\Sigma_2$ to $\Sigma_1$.  This tells us that $\bm P^2$ is reducible with two recurrent classes, $\Sigma_1$ and $\Sigma_2$, and no transient states.  Let the row vectors $\bm\pi_1=(\pi_1(i))_{i \in \Sigma}$ and $\bm\pi_2=(\pi_2(j))_{j \in \Sigma}$ be the unique stationary distributions of $\bm P^2$ concentrated on $\Sigma_1$ and $\Sigma_2$, respectively.  Then $\bm\pi_1\bm P=\bm\pi_2$ and $\bm\pi_2\bm P=\bm\pi_1$, and $\bm\pi:=\frac12(\bm\pi_1+\bm\pi_2)$ is the unique stationary distribution of $\bm P$.

We consider two Markov chains, one in $\Sigma_1\times\Sigma_2$ and the other in $\Sigma_2\times\Sigma_1$, both denoted by $\{(X_0,X_1),(X_2,X_3),(X_4,X_5),\ldots\}$.  The transition probabilities are of the form $P^*((i,j),(k,l)):=P_{jk}P_{kl}$ in both cases.  To ensure that the Markov chains are irreducible, we change the state spaces to $S_1:=\{(i,j)\in\Sigma_1\times\Sigma_2: P_{ij}>0\}$ and $S_2:=\{(j,k)\in\Sigma_2\times\Sigma_1: P_{jk}>0\}$.  The unique stationary distributions are $\bm\pi_1^*$ and $\bm\pi_2^*$ given by
$$
\pi_1^*(i,j)=\pi_1(i)P_{ij}\quad\text{and}\quad\pi_2^*(j,k)=\pi_2(j)P_{jk}.
$$
To check stationarity, we confirm that for each $(k,l)\in S_1$,
\begin{align*}
\sum_{(i,j)\in S_1}\pi_1^*(i,j)P^*((i,j),(k,l))&=\sum_{j\in\Sigma_2}\sum_{i\in\Sigma_1}\pi_1(i)P_{ij}P_{jk}P_{kl}\\
&=\sum_{j\in\Sigma_2}\pi_2(j)P_{jk}P_{kl}=\pi_1(k)P_{kl}=\pi_1^*(k,l).
\end{align*}
An analogous calculation applies to $\bm\pi_2^*$.

We claim that $\bm P^*$ is irreducible and aperiodic on $S_1$ as well as on $S_2$.  It suffices to show that all entries of $(\bm P^*)^n$ are positive on $S_1\times S_1$ and on $S_2\times S_2$ for sufficiently large $n$.  Indeed, given $(i_0,j_0),(i_n,j_n)\in S_1$,
\begin{align*}
&(\bm P^*)^n_{(i_0,j_0) (i_n,j_n)}\\
&=\sum_{(i_1,j_1),(i_2,j_2),\ldots,(i_{n-1},j_{n-1})\in S_1}P^*((i_0,j_0),(i_1,j_1))
P^*((i_1,j_1),(i_2,j_2))\cdots \\
& \hspace{2.5in} \cdots P^*((i_{n-1},j_{n-1}),(i_n,j_n))\\
&=\sum_{(i_1,j_1),(i_2,j_2),\ldots,(i_{n-1},j_{n-1})\in S_1}P_{j_0 i_1}P_{i_1 j_1}P_{j_1 i_2}P_{i_2 j_2}
\cdots P_{j_{n-1} i_n}P_{i_n j_n}\\
&=\sum_{i_1\in\Sigma_1}P_{j_0 i_1}(\bm P)^{2(n-1)}_{i_1 i_n}P_{i_n j_n}>0
\end{align*}
since all entries of $\bm P^{2(n-1)}$ are positive on $\Sigma_1\times\Sigma_1$ for sufficiently large $n$.  A similar argument applies to $S_2$.

Now we compute mean profit at stationarity.  Starting from $\bm\pi_1^*$ we have
\begin{align*}
&E_{\bm\pi_1^*}[w(X_0,X_1)+w(X_1,X_2)]\\
&\qquad{}=\sum_{(i,j)\in S_1}\sum_{(k,l)\in S_1}\pi_1^*(i,j)P^*((i,j),(k,l))[w(i,j)+w(j,k)]\\
&\qquad{}=\sum_{i\in\Sigma_1}\sum_{j\in\Sigma_2}\sum_{k\in\Sigma_1}\pi_1(i)P_{ij}P_{jk}[w(i,j)+w(j,k)]\\
&\qquad{}=\sum_{i,j\in\Sigma}\pi_1(i)P_{ij}w(i,j)+\sum_{j,k\in\Sigma}\pi_2(j)P_{jk}w(j,k)\\
&\qquad{}=\bm\pi_1\dot{\bm P}\bm1+\bm\pi_2\dot{\bm P}\bm1\\
&\qquad{}=2\bm\pi\dot{\bm P}\bm1,
\end{align*}
and the same result holds starting from $\bm\pi_2^*$.

We conclude that, starting with initial distribution $\bm\pi_1^*$, $(X_0,X_1),(X_2,X_3),\break(X_4,X_5),\ldots$ is a stationary strong mixing sequence with a geometric rate, hence the same is true of $w(X_0,X_1)+w(X_1,X_2), w(X_2,X_3)+w(X_3,X_4),\ldots$.

As in Ethier and Lee (2009), the SLLN applies and
$$
\lim_{n\to\infty}(2n)^{-1}S_{2n}=\frac12\,2\bm\pi\dot{\bm P}\bm1=\bm\pi\dot{\bm P}\bm1\text{ a.s.}
$$
The same is true starting with initial distribution $\bm\pi_2^*$, and the coupling argument used by Ethier and Lee (2009) to permit an arbitrary initial state extends to this setting as well.
\end{proof}

\section{Stationary distribution of the random walk on the $n$-cycle}\label{sec-cycle}

We will need to find the stationary distribution of the general random walk on the $n$-cycle ($n$ points arranged in a circle and labeled $0,1,2,\ldots,n-1$) with transition matrix
\begin{equation}\label{P-general}
\tabcolsep=1mm
\bm P:=\begin{pmatrix}0&p_0&\phantom{_{00}}0\phantom{_{00}}&0&\cdots&0&0&0&q_0\\
                      q_1&0&p_1&0&\cdots&0&0&0&0\\
                      0&q_2&0&p_2&\cdots&0&0&0&0\\
                      \vdots&\vdots&\vdots&\vdots& &\vdots&\vdots&\vdots&\vdots\\
                      0&0&0&0&\cdots&q_{n-3}&0&p_{n-3}&0\\
                      0&0&0&0&\cdots&0&q_{n-2}&0&p_{n-2}\\
                      p_{n-1}&0&0&0&\cdots&0&0&q_{n-1}&0\end{pmatrix},
\end{equation}
where $p_i\in(0,1)$ and $q_i:=1-p_i$.  It is possible that a formula has appeared in the literature, but we were unable to find it.  We could derive a more general result with little additional effort by replacing the diagonal of $\bm P$ by $(r_0,r_1,\ldots,r_{n-1})$, where $p_i>0$, $q_i>0$, $r_i\ge0$, and $p_i+q_i+r_i=1$ ($i=0,1,\ldots,n-1$).  But to minimize complications, we treat only the case of \eqref{P-general}.

The transition matrix $\bm P$ is irreducible and its unique stationary distribution $\bm\pi=(\pi_0,\pi_1,\ldots,\pi_{n-1})$ satisfies $\bm\pi=\bm\pi\bm P$ or
$$
\pi_i=\pi_{i-1}p_{i-1}+\pi_{i+1}q_{i+1},\quad i=1,2,\ldots,n-1,
$$
where $\pi_n:=\pi_0$ and $q_n:=q_0$, 
or 
\begin{equation*}
\pi_{i-1}p_{i-1}-\pi_i q_i=\pi_i p_i-\pi_{i+1}q_{i+1},\quad i=1,2,\ldots,n-1.
\end{equation*}
Thus, $\pi_{i-1} p_{i-1}-\pi_i q_i=C$, a constant, for $i=1,2,\ldots,n$, where $\pi_n:=\pi_0$ and $q_n:=q_0$;  alternatively,
\begin{equation}\label{statdist-recursion}
\pi_i=-\frac{C}{q_i}+\frac{p_{i-1}}{q_i}\,\pi_{i-1}.
\end{equation}

This is of the form $x_i=a_i+b_i x_{i-1}$, $i=1,2,\ldots$, the solution of which is
$$
x_i=\sum_{j=1}^i a_j\bigg(\prod_{k=j+1}^i b_k\bigg)+\bigg(\prod_{j=1}^i b_j\bigg)x_0, \quad i=1,2,\ldots,
$$
where empty products are 1.  Applying this to \eqref{statdist-recursion}, we find that
\begin{align*}
\pi_i &= -C\sum_{j=1}^i \frac{1}{q_j} \bigg(\prod_{k=j+1}^i \frac{p_{k-1}}{q_k}\bigg) + \bigg(\prod_{j=1}^i \frac{p_{j-1}}{q_j}\bigg) \pi_0 \\
&= -C \,\frac{1}{q_i}\bigg[1+\sum_{j=1}^{i-1} \bigg(\prod_{k=j}^{i-1} \frac{p_k}{q_k}\bigg)\bigg] + \frac{q_0}{q_i}\bigg(\prod_{j=0}^{i-1} \frac{p_j}{q_j}\bigg)  \pi_0,\quad i=1,2,3,\ldots,n.
\end{align*}
In particular, $C$ can be determined in terms of $\pi_0$ from the $i=n$ case (since $\pi_n:=\pi_0$ and $q_n:=q_0$).  It is given by
\begin{equation*}
C=q_0\bigg[\bigg(\prod_{j=0}^{n-1} \frac{p_j}{q_j}\bigg)-1\bigg]\bigg[1+\sum_{j=1}^{n-1} \bigg(\prod_{k=j}^{n-1} \frac{p_k}{q_k}\bigg) \bigg]^{-1}\pi_0.
\end{equation*}
Defining $\Pi_0:=1$ and 
\begin{align}\label{Pi_i}
\Pi_i&:=\!-\frac{q_0}{q_i}\bigg[\bigg(\prod_{j=0}^{n-1}\frac{p_j}{q_j}\bigg)-1\bigg]\bigg[1+\sum_{j=1}^{n-1}\bigg(\prod_{k=j}^{n-1}\frac{p_k}{q_k}\bigg)\bigg]^{-1}\bigg[1+\sum_{j=1}^{i-1}\bigg(\prod_{k=j}^{i-1}\frac{p_k}{q_k}\bigg)\bigg]\nonumber
\\
&\hspace{7cm}{}+\frac{q_0}{q_i}\bigg(\prod_{j=0}^{i-1}\frac{p_j}{q_j}\bigg)
\end{align}
for $i=1,2,\ldots,n-1$, we find that $\pi_i=\Pi_i\pi_0$ for $i=0,1,\ldots,n-1$, and the following lemma is immediate.

\begin{lemma}
The unique stationary distribution $\bm\pi=(\pi_0,\pi_1,\ldots,\pi_{n-1})$ of the transition matrix $\bm P$ of \eqref{P-general} is given by
$$
\pi_i=\frac{\Pi_i}{\Pi_0+\Pi_1+\cdots+\Pi_{n-1}},\quad i=0,1,\ldots,n-1,
$$
where $\Pi_0:=1$ and $\Pi_i$ is defined by \eqref{Pi_i} for $i=1,2,\ldots,n-1$.
\end{lemma}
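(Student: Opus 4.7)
The proof is essentially a matter of packaging the derivation already carried out in the paragraphs immediately preceding the statement, so my plan is to present it as a clean four-step argument. First I would write out the component equations of $\bm\pi=\bm\pi\bm P$ using the tridiagonal-with-corners structure of \eqref{P-general}, observe the telescoping identity $\pi_{i-1}p_{i-1}-\pi_i q_i=C$ (indices modulo $n$, with $\pi_n:=\pi_0$ and $q_n:=q_0$), and note that $C$ is independent of $i$.

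Second, I would recast this identity as the first-order linear recursion $\pi_i=-C/q_i+(p_{i-1}/q_i)\pi_{i-1}$ and apply the standard solution formula for recursions of the form $x_i=a_i+b_i x_{i-1}$ to obtain $\pi_i$ as an explicit affine function of $\pi_0$ and $C$. Third, imposing the cyclic closure condition $\pi_n=\pi_0$ gives a single linear equation that determines $C$ as a scalar multiple of $\pi_0$; substituting this value back into the closed form for $\pi_i$ and regrouping terms yields $\pi_i=\Pi_i\pi_0$ with $\Pi_i$ as in \eqref{Pi_i}, where the equality $\Pi_0=1$ is automatic from the $i=0$ case.

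Finally, I would invoke the existence and uniqueness of the stationary distribution, which follow from irreducibility of $\bm P$ (guaranteed by $p_i,q_i\in(0,1)$ so that $\bm P$ is a doubly-connected random walk on an $n$-cycle). Normalizing via $\sum_i\pi_i=1$ then forces $\pi_0=(\Pi_0+\cdots+\Pi_{n-1})^{-1}$, giving the claimed formula.

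The one place where care is needed, and the main (minor) obstacle, is verifying that the $\Pi_i$ are all of the same sign, so that the normalization actually produces a probability vector. The expression \eqref{Pi_i} is a difference of two terms, and the factor $\prod_{j=0}^{n-1}(p_j/q_j)-1$ can be positive or negative depending on the biases; a direct sign check would be messy. The clean way around this is an a posteriori argument: since $\bm P$ is irreducible on a finite state space, the stationary distribution is unique with strictly positive entries, so once the proportionality $\pi_i=\Pi_i\pi_0$ is established, positivity of $\pi_0$ (say, $\pi_0>0$ by uniqueness and strict positivity) forces $\Pi_i>0$ for all $i$, making the normalization legitimate.
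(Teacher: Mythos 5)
Your proposal is correct and follows essentially the same route as the paper, which derives the telescoping identity $\pi_{i-1}p_{i-1}-\pi_i q_i=C$, solves the resulting first-order recursion, determines $C$ from the cyclic closure condition, and normalizes. Your added a posteriori sign check on the $\Pi_i$ (via strict positivity of the stationary distribution of an irreducible finite chain) is a sensible extra precaution that the paper leaves implicit, but it does not change the argument.
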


\begin{example}\label{p,p,p}
As a check of the formula, consider the case in which $p_0=p_1=\cdots=p_{n-1}=p\in(0,1)$ and $q_0=q_1=\cdots=q_{n-1}=q:=1-p$.  Here the transition matrix is doubly stochastic, so the unique stationary distribution is discrete uniform on $\{0,1,\ldots,n-1\}$.  Indeed, algebraic simplification shows that $\Pi_0=\Pi_1=\cdots=\Pi_{n-1}=1$.
\end{example}

\begin{example}\label{p0,p,p}
Consider next the case in which $p_1=p_2=\cdots=p_{n-1}=p\in(0,1)$ and $q_1=q_2=\cdots=q_{n-1}=q:=1-p$.  Of course $p_0$ and $q_0:=1-p_0$ may differ from $p$ and $q$.  Then $\Pi_0:=1$ and
\begin{equation}\label{Pi_i,Ex2}
\Pi_i=-\bigg[\frac{(p_0/q)(p/q)^{n-1}-q_0/q}{(p/q)^n-1}\bigg]((p/q)^i-1)+(p_0/q)(p/q)^{i-1}
\end{equation}
for $i=1,2,\ldots,n-1$.  It follows that
\begin{align}\label{Pi-sum,Ex2}
\sum_{i=0}^{n-1}\Pi_i&=1-\bigg[\frac{(p_0/q)(p/q)^{n-1}-q_0/q}{(p/q)^n-1}\bigg]\bigg[\frac{(p/q)((p/q)^{n-1}-1)}{p/q-1}-(n-1)\bigg]\nonumber\\
&\qquad{}+\frac{(p_0/q)((p/q)^{n-1}-1)}{p/q-1}\nonumber\\
&=1-\frac{p_0-q_0}{p-q}+n\,\frac{p_0p^{n-1}-q_0q^{n-1}}{p^n-q^n},
\end{align}
where the last step involves some algebra and we have implicitly assumed that $p\ne\frac12$.  In particular, $\pi_0$ is the reciprocal of \eqref{Pi-sum,Ex2}.  This result is useful in evaluating $\mu(r,\rho,\gamma A+(1-\gamma)B)$; see Section~\ref{sec-rate}.
\end{example}

\begin{example}\label{1-p,p,p}
Consider finally the special case of Example \ref{p0,p,p} in which $p_0=q$ and $q_0=p$.  The \eqref{Pi_i,Ex2} becomes
\begin{equation*}\label{Pi_i,Ex3}
\Pi_i=-\bigg[\frac{(p/q)((p/q)^{n-2}-1)}{(p/q)^n-1}\bigg]((p/q)^i-1)+(p/q)^{i-1}
\end{equation*}
for $i=1,2,\ldots,n-1$, and \eqref{Pi-sum,Ex2} becomes
\begin{align}\label{Pi-sum,Ex3}
\sum_{i=0}^{n-1}\Pi_i =2+npq\,\frac{p^{n-2}-q^{n-2}}{p^n-q^n}.
\end{align}
We have again implicitly assumed that $p\ne\frac12$, and again $\pi_0$ is the reciprocal of \eqref{Pi-sum,Ex3}.  This result is useful in evaluating $\mu(r,0,\gamma A+(1-\gamma)B)$; see Section~\ref{sec-rate}.
\end{example}

\section{Evaluation of rate of profit}\label{sec-rate}

Recall that mean profit has the form $\mu=\bm\pi\dot{\bm P}\bm1$, which we apply to $\bm P:=\gamma\bm P_A+(1-\gamma)\bm P_B$.  

To find $\mu(r,\rho,\gamma A+(1-\gamma)B)$, it suffices to note that $\bm P$ has the form \eqref{P-general} under the assumptions of Example~\ref{p0,p,p} with $n:=r$,
\begin{equation}\label{p,p0}
p:=\frac{\gamma}{2}+(1-\gamma)\frac{1}{1+\rho},\quad\text{and}\quad p_0:=\frac{\gamma}{2}+(1-\gamma)\frac{\rho^{r-1}}{1+\rho^{r-1}},
\end{equation}
where $0<\rho<1$.  Thus,
\begin{equation}\label{mu(r,rho)}
\mu(r,\rho,\gamma A+(1-\gamma)B)=\pi_0(p_0-q_0)+(1-\pi_0)(p-q),
\end{equation}
with $\pi_0$ being the reciprocal of \eqref{Pi-sum,Ex2}.

To find $\mu(r,0,\gamma A+(1-\gamma)B)$, it suffices to note that $\bm P$ has the form \eqref{P-general} under the assumptions of Example~\ref{p0,p,p} with $n:=r$,
$$
p:=\frac{\gamma}{2}+(1-\gamma)1=1-\frac{\gamma}{2},\quad\text{and}\quad p_0:=\frac{\gamma}{2}+(1-\gamma)0=\frac{\gamma}{2}=1-p=q.
$$
We are therefore in the setting of Example~\ref{1-p,p,p}, and
\begin{equation}\label{mu(r,0)}
\mu(r,0,\gamma A+(1-\gamma)B)=\pi_0(q-p)+(1-\pi_0)(p-q)=(p-q)(1-2\pi_0),
\end{equation}
with $\pi_0$ being the reciprocal of \eqref{Pi-sum,Ex3}.

\begin{proof}[Proof of Theorem~\ref{rate-random}]
From \eqref{mu(r,0)} and \eqref{Pi-sum,Ex3} with $n=r$, we have
\begin{align*}
\mu(r,0,\gamma A+(1-\gamma)B)&=(p-q)(1-2\pi_0)\\
&=(p-q)\bigg(1-\frac{2(p^r-q^r)}{2(p^r-q^r)+rpq(p^{r-2}-q^{r-2})}\bigg)\\
&=\frac{rpq(p-q)(p^{r-2}-q^{r-2})}{2(p^r-q^r)+rpq(p^{r-2}-q^{r-2})},
\end{align*}
and the theorem follows by substituting $1-\gamma/2$ and $\gamma/2$ for $p$ and $q$.
\end{proof}

\begin{proof}[Proof of Corollary~\ref{rate-random-asymp}]
We want to show that $\mu(r,0,\gamma A+(1-\gamma)B)$ can be close to 1 by choosing $p:=1-\gamma/2$ close to 1 and $\pi_0$ close to 0, which requires $r$ large.  So we consider a sequence $p\to 1$ as $r\to\infty$.  In this case,
\begin{align*}
\pi_0
&=\frac{p^r-q^r}{2(p^r-q^r) + rpq(p^{r-2}-q^{r-2})}\\
&\sim\frac{p^r}{2 p^r +r qp^{r-1}}\\
&=\frac{p}{2p+rq}.
\end{align*}
Now let us specify that $p=1-1/\sqrt{r}$ (equivalently, $\gamma=2/\sqrt{r}$).  Then, by \eqref{mu(r,0)},
\begin{align*}
1-\mu(r,0,\gamma A+(1-\gamma)B)&=1-(p-q)(1-2\pi_0)\\
&\sim1-(1-2/\sqrt{r})\bigg(1-\frac{2(1-1/\sqrt{r})}{2(1-1/\sqrt{r})+\sqrt{r}}\bigg)\\
&\sim\frac{4}{\sqrt{r}},
\end{align*}
as required.
\end{proof}

\begin{proof}[Proof of Corollary~\ref{rate-periodic-asymp}]
For even $r\ge4$ and positive integers $s\le r/2$, Theorem~\ref{rate-periodic} implies that
\begin{align*}
&1-\mu(r,0,(AB)^s B^{r-2})\\
&\qquad{}=1-\bigg(1-\frac{2(s-1)}{r+2(s-1)}\bigg)\bigg(1-\frac{1}{2^s}\bigg)\\
&\qquad{}=\frac{2s}{r+2(s-1)}-\frac{2}{r+2(s-1)}+\frac{1}{2^s}-\frac{2(s-1)}{r+2(s-1)}\cdot\frac{1}{2^s},
\end{align*}
if initial capital is even. With $s$ replaced by $s_r:=\lfloor\log_2 r\rfloor-1$, the first term is asymptotic to $2s_r/r$ as $r\to\infty$ and the remaining terms are $O(1/r)$.

For odd $r\ge3$, the argument is essentially the same.
\end{proof}

\begin{proof}[Proof of Theorem~\ref{sup=1,random}]
It is enough to show that $\mu(r,\rho,\gamma A+(1-\gamma)B)$ is continuous at $\rho=0$ for fixed $r$ and $\gamma$.  In fact, there is a complicated but explicit formula, given by \eqref{mu(r,rho)}, using \eqref{Pi-sum,Ex2} and \eqref{p,p0}, showing that it is a rational function of $\rho$.  Therefore, we need only show that it does not have a pole at $\rho=0$.  In fact, Theorem~\ref{rate-random} shows that $\mu(r,0,\gamma A+(1-\gamma)B)$ is the ratio of two positive numbers, and this is sufficient. 
\end{proof}

\section*{Acknowledgments}

We are grateful to Derek Abbott for raising the question addressed here.  SNE was partially supported by a grant from the Simons Foundation (429675).  JL was supported by the Basic Science Research Program through the National Research Foundation of Korea (NRF) funded by the Ministry of Education (NRF-2018R1D1A1B07042307).

\begin{newreferences}

\item
Ajdari, A. and Prost, J. (1992). Drift induced by a spatially periodic potential of low symmetry: Pulsed dielectrophoresis. \textit{C. R. Acad.\ Sci., S\'erie 2} \textbf{315}, 1635--1639.

\item
Bradley, R. C. (2005). Basic properties of strong mixing conditions. A survey and some open questions. \textit{Probab.\ Surveys} \textbf{2} 107--144.

\item
Dinis, L. (2008). Optimal sequence for Parrondo games.  \textit{Phys.\ Rev.\ E}  \textbf{77}, 021124. 

\item
Ethier, S. N. and Lee, J. (2009). Limit theorems for Parrondo's paradox.  \textit{Electronic J. Probab.} \textbf{14} (62), 1827--1862. 

\item
Ethier, S. N. and Lee, J. (2019). How strong can the Parrondo effect be?  \textit{J. Appl.\ Probab.} \textbf{56} (4), 1198--1216.

\item
Harmer, G. P. and Abbott, D. (1999). Parrondo's paradox. \textit{Statist.\ Sci.} \textbf{14} (2), 206--213.

\item
Pyke, R. (2003). On random walks and diffusions related to Parrondo's games. In \textit{Mathematical Statistics and Applications: Festschrift for Constance Van Eeden}, ed. M. Moore, S. Froda, and C. L\'eger.  IMS Lecture Notes--Monograph Series \textbf{42}, Institute of Mathematical Statistics, Beachwood, OH, pp.~185--216. 

\end{newreferences}

\end{document}